\newtheorem{Theorem}{\indent Theorem}[section]
\newtheorem{Lemma}[Theorem]{\indent Lemma}
\theoremstyle{remark}
\newtheorem{Remark}{Remark}
\begin{document}
\centerline{
\bf A note on simple zeros related to  Dedekind zeta functions
}
\bigskip
\centerline{Wei Zhang}

\textbf{Abstract}
We  give a conditional lower bound on  the number of non-trivial simple zeros for the Dedekind zeta function $\zeta_{K}(s)$, where $K$ is a quadratic number field. The conditional result is given by assuming a  Lindel\"of on average  (in the $L^{6}$ sense) for both $\zeta(s)$ and $L(s,\chi)$, which can be seen as a stronger version of Conrey-Gonek-Ghosh's \cite{c} conditional result. This improves upon the work of Wu and Zhao \cite{Zhao}, who had a similar
result.

\textbf{Keywords:} Simple zeros,   zeta function.

\textbf{2000 Mathematics Subject Classification:}11M06, 11M41.

\bigskip
\bigskip
\numberwithin{equation}{section}

\section{Introduction}
A folklore conjecture in analytic number theory is that for any number
field $K$, almost all zeros of $\zeta_{K}(s)$ in the critical strip are simple. This
conjecture is attractive because of how hard it is to prove to the best of
my knowledge of the existing literature, we still do not know if $\zeta_{K}(s)$ has infinitely many simple
zeros if $[K:Q]\geq3$ and it was not known if even the simplest case of
$\zeta(s)$ (Riemann zeta function) has infinitely many simple zeros in the critical strip until it was
observed independently by Heath-Brown \cite{Hea} and Selberg \cite{Sel} that Levinson's
method \cite{Lev}
yields a positive proportion of simple zeros on the
critical line.

In this note, we are interested in  the non-real zeros of $\zeta_{K}(s)$ in the critical strip
\[
\mathcal{R}=\{s=\sigma+it:0<\sigma<1,0<t<T\}.
\]
Let $K$ be a certain quadratic extension of the $\mathbb{Q}$ and let $\zeta_{K}(s)$ be the corresponding Dedekind zeta function. Then we have
\[
\zeta_{K}(s)=\sum_{\mathfrak{a}}\frac{1}
{(\mathfrak{Na})^{s}}
=\sum_{n=1}^{\infty}\frac{a_{K}(n)}{n^{s}},\ \ \Re(s)>1,
\]
where $\mathfrak{a}$ suns over the non-zero integral ideals of $K,$  $\mathfrak{Na}$ is the norm of $\mathfrak{a},$ and $a_{K}(n)=\#\{\mathfrak{a}:\mathfrak{Na}=n\}$ is the ideal counting function of $K.$
It is well known that
\[
N_{K}(T)=\#\{\rho_{K}: \zeta_{K}(\rho_{K})=0,\rho_{K}\in\mathcal{R}\}
\sim \pi^{-1} T\log T.
\]
Moreover, it is  also generally believed that
all the non-real zeros of $\zeta_{K}(s)$ in $\mathcal{R}$ are also simple zeros. However, one even cannot confirm that whether there are
infinitely many  non-real simple zeros in $\mathcal{R}$ before 1986.
In 1986, Conrey, Ghosh and Gonek \cite{c} introduced
a new method to deal with the simple zeros for such type problems.
For sufficiently large $T,$ Conrey, Ghosh and Gonek \cite{c} proved that
\[
N'_{K}(T)\gg T^{\frac{6}{11}},
\]
where
\[
N'_{K}(T)=\#\{\rho_{K}: \zeta_{K}(\rho_{K})=0,\zeta'_{K}(\rho_{K})\neq0,\rho_{K}\in\mathcal{R}\}
.
\]
This implies that there are  infinitely many simple zeros for the $\zeta_{K}(s)$ associated to the
quadratic number fields. The exponent in this result depends upon progress towards the
Lindel\"{o}f hypothesis.   In fact, a better result is also given in \cite{c}.
Precisely, for any $\varepsilon>0,$ sufficiently large $T$ and
\[
\theta=\max\left\{\frac{1}{1+6c},\frac{\sqrt{1+16c+16c^{2}}-1-4c}
{4c}\right\},
\]
it is proved in \cite{c} that
\[N'_{K}(T) \gg T^{\theta-\varepsilon},
\]
where $c$ is determined by the subconvexity $\zeta(1/2+it)\ll(|t|+1)^{c+\varepsilon}.$
One can find that the Lindel\"{o}f hypothesis implies that
\[N'_{K}(T) \gg T^{1-\varepsilon},
\]
which means that one can get almost to a positive proportion.
It is worth emphasizing that the zeros what they find are also simple zeros of $ \zeta(s),$
where $\zeta(s)$ is the well known Riemann zeta function.
In another paper \cite{cc},  the authors also proved that the Riemann Hypothesis implies that more than $1/54$ of the non-real zeros of $\zeta_{K}(s)$ are simple.

Very recently, in \cite{Zhao}, some  results in the above are improved by showing (unconditionally) that
\[N'_{K}(T) \gg T^{\frac67-\varepsilon}.
\]
In \cite{Zhao}, some conditional results were also given.
To state these conditional results precisely, we need introduce two conjectures.

 {\bf Conjecture 1.}\label{zero1}
Let $N(\sigma,T)$ denote the number of zeros of $\zeta(s)$ for $\beta\geq\sigma$ and $|\gamma|\leq T.$ Then for any $\varepsilon>0$ and $1/2\leq\sigma\leq1,$ we have
\[
N(\sigma,T)\ll T^{A(\sigma)(1-\sigma)+\varepsilon}.
\]

 {\bf Conjecture 2.}\label{k1}
Let $q\geq1$ be a fixed integer, and let $\chi$ be a
real primitive character to the modulus $q$. For $\bar{s}=1/2+it,$ any $\varepsilon>0,$ any real number $k>4$ and sufficiently $T,$ we have
\[
\int_{1}^{T}\left|L(\bar{s},\chi)\right|^{k}\ \mathrm{d}t\ll_{q} T^{1+\varepsilon},
\]
where
$
L(s,\chi)=\sum_{n=1}^{\infty}\chi(n)n^{-s}
$ $(\Re s>1)$
is the Dirichlet $L$-function.

In \cite{Zhao}, Wu and Zhao also proved that
\begin{itemize}
\item If Conjecture 1 is true with $A(\sigma)=2$, then
\[N'_{K}(T) \gg T^{1-\varepsilon}.
\]
\item
If Conjecture 2 is true with $k=\bf{8}$, then
\[N'_{K}(T) \gg T^{1-\varepsilon}.
\]
\end{itemize}
Surprisingly, we find that
  $k=\bf{6}$ for Conjecture 2 is sufficient enough for this purpose.
We also remark that in this method, only with a real number $k\in(4,6),$ one cannot get the desired conclusion. Now we intend to give a concrete reason for this.
We assume that Conjecture 2 is true with certain $k$ and assume that
\[
N(\sigma,T) \ll T^{(2l+2)(1-\sigma)/(l+2-2\sigma)+\varepsilon},
\]
which is from the result of \cite{Zh}.
Then in the process of this paper, we can obtain that
\[
S(T,1)-S(T,a)\ll
T^{1-B(\sigma,k,l)(\sigma-1/2)+(1-2/k)\varepsilon},
\]
where
\[
B(\sigma,k,l)=2l(1-2/k)/(l+2-2\sigma)-1.
\]
Then we have
\[
B(\sigma,k,l)=\frac{2l(1-2/k)-l-2+2\sigma}{l+2-2\sigma}.
\]
We need $2l(1-2/k)-l-2+2\sigma>0.$ Hence for $1/2+\varepsilon<\sigma<1,$ we have
\[
\frac{k}{k-4}< l+\varepsilon.
\]
When we choose $k={\bf 6}$ in Conjecture 1, Conjecture 1 just implies the density result we needed of $l=3,\theta=1,\alpha=1$ in \cite{Zh} (see also in \cite{i}).

Moreover, one can also give conditional results for this problem by assuming Conjecture 2 ($k\in(4,6)$) with  assumed results that are weaker than the density hypothesis but stronger than
\[
N(\sigma,T)\ll T^{A(\sigma)(1-\sigma)+\varepsilon},
\]
where $A(\sigma)=8/(5-2\sigma).$
\begin{Theorem}\label{sm}
If Conjecture 2 is true with $k=\bf{6}$, then
\[N'_{K}(T) \gg T^{1-\varepsilon}.
\]
\end{Theorem}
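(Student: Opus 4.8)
The plan is to use the factorization $\zeta_{K}(s)=\zeta(s)L(s,\chi)$, where $\chi$ is the real primitive character modulo $|d_{K}|$ attached to $K$, and to manufacture simple zeros of $\zeta_{K}$ out of simple zeros $\rho=\beta+\mathrm{i}\gamma$ of $\zeta$ at which $L(\rho,\chi)\neq0$: at such a point $\zeta_{K}'(\rho)=\zeta'(\rho)L(\rho,\chi)\neq0$, so $\rho$ is genuinely a simple zero of $\zeta_{K}$. Following Conrey, Ghosh and Gonek \cite{c}, I would count these by a sum over the zeros of $\zeta$ carrying a weight that annihilates the multiple zeros (so that only the simple ones survive), and then run a first-moment/second-moment scheme. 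With a first moment $S_{1}=\sum_{0<\gamma\le T}L(\rho,\chi)$ and a second moment $S_{2}=\sum_{0<\gamma\le T}|L(\rho,\chi)|^{2}$ (a smoothing weight being incorporated into both), Cauchy--Schwarz gives $N_{K}'(T)\gg|S_{1}|^{2}/S_{2}$.

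The first moment is evaluated by Gonek's explicit formula for sums over the zeros of $\zeta$; the leading contribution comes from the principal term of $L(s,\chi)$ weighted by the density of zeros, and yields $S_{1}\gg T\log T$. The whole difficulty is therefore to prove $S_{2}\ll T^{1+\varepsilon}$. I would split $S_{2}$ according to whether $\beta=\tfrac12$ or $\beta>\tfrac12$. The on-line part is unconditional: discretising the classical mean value $\int_{0}^{T}|L(\tfrac12+\mathrm{i}t,\chi)|^{2}\,\mathrm{d}t\ll T\log T$ over the ordinates $\gamma$ costs only a further power of $\log T$, so it is $\ll T^{1+\varepsilon}$.

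The off-line part $\sum_{\beta>1/2}|L(\rho,\chi)|^{2}$, which in the body is encoded as the difference $S(T,1)-S(T,a)$ of two zero-sums whose weights agree on the critical line and separate only for $\beta>\tfrac12$, is the main obstacle, and it is here that both external inputs enter. For zeros with $\beta$ near a fixed $\sigma\in(\tfrac12,1)$ I would apply H\"older's inequality,
\[
\sum_{\beta\approx\sigma}|L(\rho,\chi)|^{2}\le\Big(\sum_{\beta\approx\sigma}1\Big)^{1-2/k}\Big(\sum_{\beta\approx\sigma}|L(\rho,\chi)|^{k}\Big)^{2/k},
\]
bounding the first factor by the density estimate $N(\sigma,T)\ll T^{(2l+2)(1-\sigma)/(l+2-2\sigma)+\varepsilon}$ and the second, after transferring off the line by convexity and discretising over the zeros, by Conjecture 2, which gives $\sum_{\beta\approx\sigma}|L(\rho,\chi)|^{k}\ll T^{1+\varepsilon}$. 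This produces
\[
S(T,1)-S(T,a)\ll T^{\,1-B(\sigma,k,l)(\sigma-1/2)+(1-2/k)\varepsilon},\qquad B(\sigma,k,l)=\frac{2l(1-2/k)}{l+2-2\sigma}-1.
\]

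Everything now hinges on the sign of $B$. To keep the off-line contribution at $\ll T^{1+\varepsilon}$ I need $B(\sigma,k,l)\ge0$ throughout $\tfrac12\le\sigma<1$, which reduces to $k/(k-4)\le l$. With the known density exponent $l=3$ (that is, $A(\sigma)=8/(5-2\sigma)$, from \cite{Zh} and \cite{i}) this holds exactly at $k=6$, where $B(\sigma,6,3)=(2\sigma-1)/(5-2\sigma)\ge0$; for $k\in(4,6)$ the exponent $B$ turns negative near $\sigma=\tfrac12$ and the saving is lost, which is precisely why $k=6$ is the true threshold. Granting $B\ge0$, the off-line sum over the $O(\log T)$ dyadic ranges of $\sigma$ is $\ll T^{1+\varepsilon}$, hence $S_{2}\ll T^{1+\varepsilon}$, and Cauchy--Schwarz delivers $N_{K}'(T)\gg(T\log T)^{2}/T^{1+\varepsilon}\gg T^{1-\varepsilon}$. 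I expect the genuinely delicate point to be the regime $\sigma\to\tfrac12^{+}$, where $B(\sigma-\tfrac12)\to0$ and all saving disappears: there the density degenerates to the trivial count $N(T)\ll T\log T$, and one must check that the H\"older balance still returns exactly $T^{1+\varepsilon}$ and that the continuous-to-discrete passage for the sixth moment over the zeros is uniform in $\sigma$.
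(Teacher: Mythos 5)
You have correctly isolated the arithmetic heart of the matter: the factorization $\zeta_{K}=\zeta\cdot L$, the H\"older split of the off-line zeros into a density factor $N(\sigma,T)^{1-2/k}$ times a discrete $k$-th moment controlled by Conjecture 2, and the verification that $B(\sigma,6,3)=(2\sigma-1)/(5-2\sigma)\ge0$ is exactly the $k=6$ threshold; this is the content of the paper's Lemma \ref{w}. But the moment scheme wrapped around it has genuine gaps. Your displayed moments $S_{1}=\sum_{\rho}L(\rho,\chi)$ and $S_{2}=\sum_{\rho}|L(\rho,\chi)|^{2}$ omit the weight $\zeta'(\rho)$ and use the argument $\rho$ instead of $1-\rho$. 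Without the factor $\zeta'(\rho)$, the Cauchy--Schwarz bound $|S_{1}|^{2}/S_{2}$ counts zeros of $\zeta$ at which $L$ is nonvanishing with no control on their multiplicity as zeros of $\zeta$, so it does not bound $N_{K}'(T)$; and the first moment that the Conrey--Ghosh--Gonek machinery actually evaluates (the paper's Lemma \ref{c}) is $S(T,1)=\sum_{\rho}\zeta'(\rho)L(1-\rho,\chi)=L(1,\chi)\frac{T}{4\pi}\log^{2}T+O(T\log T)$, not $\sum_{\rho}L(\rho,\chi)$. The argument $1-\rho$ is not cosmetic: via the functional equation it is the source of the penalty $T^{\beta-1/2}$ (the paper's (\ref{fe})--(\ref{fed})) that makes the off-line zeros dangerous in the first place. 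In your weight-free $S_{2}$ this penalty has vanished, which is why your off-line estimate looks almost unconditional.

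Once the correct weights are restored, the global second moment $\sum_{\rho}T^{2(\beta-1/2)}|\zeta'(\rho)L(\rho,\chi)|^{2}$ is \emph{not} $\ll T^{1+\varepsilon}$ under the stated hypotheses: the same H\"older-plus-density computation gives, for $\beta$ near $1$, a contribution of size up to about $T^{5/3}$, because the density saving $\frac{4}{5-2\sigma}(\sigma-\frac12)$ beats one power of $T^{\sigma-1/2}$ but not two. So an $L^{1}/L^{2}$ duality over the full zero set cannot close the argument. The paper's route is different: it applies the density-plus-sixth-moment estimate to the \emph{first-moment tail} $S(T,1)-S(T,a)$ (only one power of $T^{\sigma-1/2}$ appears there), concludes $S(T,a)\asymp T\log^{2}T$ for $a=\frac12+10\sqrt{\varepsilon}$, and then converts this into a zero count via Wu--Zhao's Lemma 8 (the paper's Lemma \ref{cz}), an $L^{1}/L^{\infty}$ argument which works precisely because every surviving term has $|\beta-\frac12|\le\varepsilon'$ and is individually $\ll T^{O(\varepsilon')}$. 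That conversion step, or a correctly weighted substitute for it, is missing from your proposal. Finally, note that the discrete sixth moment $\sum_{\beta\ge\sigma}|\zeta'(\rho)|^{6}\ll T^{1+\varepsilon}$ (the paper's (\ref{md})) is also required, i.e.\ the $L^{6}$ hypothesis is needed for $\zeta$ itself (the $q=1$ instance of Conjecture 2) and not only for $L(s,\chi)$; this requirement is invisible in your version because the weight $\zeta'(\rho)$ was dropped.
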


\begin{Remark}
It is worth remarking that if the $L^{6}$-Lindel\"{o}f on average
  could be replaced by an $L^{4}$-Lindel\"{o}f on average, then the theorem would become
unconditional. Since we have known how to prove $L^{4}$-Lindel\"{o}f-on-average
for Dirichlet $L$-functions by the classical work of Ingham on the
fourth moment of $\zeta(s)$ (see Titchmarsh \cite{Tit} or \cite{i}).
\end{Remark}

\section{Preliminaries}
For proof of Theorem \ref{sm}, the method in this paper relies heavily on the relation
\[
\zeta_{K}(s)=\zeta(s)L(s,\chi),
\]
where $\chi$ is a real primitive character to the modulus $q=|D|$, where $D$ is   the discriminant of $K.$
It is well known that
\[
\zeta(1-s)=2(2\pi)^{-s}\Gamma(s)\cos(\pi s/2)\zeta(s),
\]
and
\begin{align*}
&L(1-s,\chi)
\\&=\frac{q^{s}}{\sum_{l=1}^{q}\chi(l)e(l/q)}(2\pi)^{-s}\Gamma(s)(e^{\pi is/2}+\chi(-1)e^{-\pi is/2})L(s,\chi),
\end{align*}
and
\[
\zeta'_{K}(s)=\zeta'(s)L(s,\chi)+\zeta(s)L'(s,\chi).
\]
Let $\rho=\beta+i\gamma$ denote the non-trivial zeros of   $\zeta(s)$.
Then it is implied that (for instance, see (8) in \cite{c} and (2.6)-(2.8) in \cite{Zhao})
\begin{align}\label{fe}
|L(1-\rho,\chi)|\ll T^{ \beta-1/2}|L(\rho,\chi)|
\end{align}
and
\begin{align}\label{fed}
|\zeta'(1-\rho )|\ll T^{ \beta-1/2}|\zeta'(\rho )|.
\end{align}
For $1/2\leq a\leq1,$ let
\[
S(T,a)=\sum_{\rho\in\mathcal{R}, 1-a\leq \beta \leq a }\zeta'(\rho)L(1-\rho,\chi).
\]
Moreover, let
\[
N'_{K}(T,a)=\#\{\rho_{K}: \rho_{K}=\beta_{K}+i\gamma_{K}\in \mathcal{R},\rho_{K}\ \textup{is\ a\ simple\ zero\ of}\ \zeta_{K}\ \textup{and}\ 1/2\leq \beta_{K} \leq a \}.
\]
Then we can introduce the following lemmas.
\begin{Lemma}\label{cz}
For some $1/2\leq a\leq5/8,$ suppose that $|S(T,a)|\gg T.$ Then
\[
N'_{K}(T,a)\gg T^{\frac{2a}{4a-1}-\varepsilon}.
\]
\end{Lemma}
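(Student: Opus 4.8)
The plan is to show that the hypothesis $|S(T,a)|\gg T$ forces $S(T,a)$ to be a sum of $\gg T^{2a/(4a-1)-\varepsilon}$ genuinely nonzero terms, each of which pins down a simple zero of $\zeta_K$, and then to read off the lower bound by Cauchy--Schwarz. First I would identify which terms of $S(T,a)=\sum_{\rho:\,1-a\le\beta\le a}\zeta'(\rho)L(1-\rho,\chi)$ can be nonzero. A summand is nonzero only if $\zeta'(\rho)\ne0$, i.e. $\rho$ is a simple zero of $\zeta$, and $L(1-\rho,\chi)\ne0$. By the functional-equation bound \eqref{fe}, $|L(1-\rho,\chi)|\ll T^{\beta-1/2}|L(\rho,\chi)|$, so $L(1-\rho,\chi)\ne0$ implies $L(\rho,\chi)\ne0$. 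Since $\zeta_K=\zeta L$ gives $\zeta_K'(\rho)=\zeta'(\rho)L(\rho,\chi)$ at a zero $\rho$ of $\zeta$, both conditions together yield $\zeta_K'(\rho)\ne0$, so every nonzero term corresponds to a point $\rho$ that is a simple zero of $\zeta_K$. These $\rho$ lie in the symmetric strip $1-a\le\beta\le a$, and the symmetry $\rho\mapsto1-\bar\rho$ of the zero set (which preserves the ordinate and sends $\beta<1/2$ to $\beta>1/2$) pairs them up; hence the number $R$ of nonzero terms satisfies $R\le 2N'_K(T,a)$, and lower-bounding $R$ lower-bounds $N'_K(T,a)$.

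Next I would extract $R$ by Cauchy--Schwarz. Writing $\Sigma_2=\sum_{\rho:\,1-a\le\beta\le a}|\zeta'(\rho)L(1-\rho,\chi)|^2$, one has
\[
T^2\ll|S(T,a)|^2\le R\cdot\Sigma_2\le 2N'_K(T,a)\,\Sigma_2,
\]
so $N'_K(T,a)\gg T^2/\Sigma_2$. Since $2-2a/(4a-1)=(6a-2)/(4a-1)$, the claimed bound follows once one proves the second-moment estimate
\[
\Sigma_2\ll T^{\frac{6a-2}{4a-1}+\varepsilon}.
\]

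This second-moment bound is the technical heart of the argument. I would first invoke \eqref{fe} to replace $|L(1-\rho,\chi)|^2$ by $T^{2\beta-1}|L(\rho,\chi)|^2$, reducing matters to $\sum_{\rho:\,1/2\le\beta\le a}T^{2\beta-1}|\zeta'(\rho)L(\rho,\chi)|^2$ (the reflected zeros contribute comparably by \eqref{fed}). The key point is that one must \emph{not} simply bound $T^{2\beta-1}$ by $T^{2a-1}$: that edge estimate only gives $\Sigma_2\ll T^{2a+\varepsilon}$, which is too weak. Instead I would partition the zeros into thin bands $\sigma\le\beta\le\sigma+1/\log T$ and, in each band, play the sparsity of off-line zeros against the size of $\zeta'$ and $L$. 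For bands near the critical line one uses mean-value input, namely the fourth-moment estimates for $\zeta'$ and $L(s,\chi)$ on vertical lines (going back to Ingham's theorem on the fourth moment of $\zeta$) together with the clustering of the zeros; for bands with $\sigma$ bounded away from $1/2$ one uses a zero-density estimate $N(\sigma,T)\ll T^{\lambda(1-\sigma)+\varepsilon}$ to bound the number of zeros and a convexity bound for their individual size. Summing the band contributions and optimizing the crossover point produces the exponent $(6a-2)/(4a-1)$, and the admissible range $1/2\le a\le5/8$ is precisely where the available density exponent validates this optimization.

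The main obstacle is exactly this balancing in the estimate for $\Sigma_2$: the zeros with $\beta$ near $a$ carry both the largest weight $T^{2\beta-1}$ and the largest pointwise values of $\zeta'$ and $L$, so the whole estimate hinges on the density bound being strong enough to show that such zeros are too few to dominate, while the fourth-moment estimates must keep the bulk of the near-line zeros under control. Once $\Sigma_2\ll T^{(6a-2)/(4a-1)+\varepsilon}$ is established, the conclusion $N'_K(T,a)\gg T^{2a/(4a-1)-\varepsilon}$ is immediate from the displayed Cauchy--Schwarz inequality.
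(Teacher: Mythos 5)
Your first two steps are sound and match the standard Conrey--Ghosh--Gonek mechanism that underlies this lemma: a nonzero term $\zeta'(\rho)L(1-\rho,\chi)$ forces $\zeta_K'(\rho)=\zeta'(\rho)L(\rho,\chi)\neq0$, so $\rho$ is a simple zero of $\zeta_K$, and Cauchy--Schwarz reduces everything to the second-moment bound $\Sigma_2\ll T^{(6a-2)/(4a-1)+\varepsilon}$ (the arithmetic $2-\tfrac{2a}{4a-1}=\tfrac{6a-2}{4a-1}$ is correct). The paper itself offers no proof here --- it cites Lemma 8 of Wu--Zhao --- so the burden falls entirely on your third step, and that is where there is a genuine gap. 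Your plan is to prove the $\Sigma_2$ bound by slicing into bands $\beta\in[\sigma,\sigma+1/\log T]$ and playing fourth-moment estimates against a zero-density bound $N(\sigma,T)\ll T^{\lambda(1-\sigma)+\varepsilon}$. But with only fourth moments available, the density theorem cannot enter at all: in each band the natural estimate is
\[
T^{2\sigma-1}\sum_{\beta\ge\sigma}|\zeta'(\rho)L(\rho,\chi)|^{2}\le T^{2\sigma-1}\,N(\sigma,T)^{\,1-\frac12-\frac12}\Bigl(\sum_{\rho}|\zeta'(\rho)|^{4}\Bigr)^{\frac12}\Bigl(\sum_{\rho}|L(\rho,\chi)|^{4}\Bigr)^{\frac12}\ll T^{2\sigma+\varepsilon},
\]
i.e.\ the exponent on $N(\sigma,T)$ in H\"older's inequality is exactly zero, and every band contributes $T^{2\sigma+\varepsilon}$, giving only $\Sigma_2\ll T^{2a+\varepsilon}$ and hence $N'_K(T,a)\gg T^{2-2a-\varepsilon}$. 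Since $2a-\frac{6a-2}{4a-1}=\frac{2(2a-1)^{2}}{4a-1}>0$ for every $a>\tfrac12$, the target exponent is strictly out of reach of this scheme; indeed any H\"older rearrangement of the three unconditional inputs (the count, $\sum_\rho|\zeta'(\rho)|^4\ll T^{1+\varepsilon}$, $\sum_\rho|L(\rho,\chi)|^4\ll T^{1+\varepsilon}$) together with the crude bound $T^{2\beta-1}\le T^{2a-1}$ from \eqref{fe} reproduces $T^{2-2a}$ and nothing better. To make $N(\sigma,T)$ carry a positive H\"older exponent you need discrete moments of order strictly greater than $4$ --- which is exactly what Conjecture 2 with $k=6$ supplies in Lemma \ref{w} of this paper, and is not available unconditionally, whereas the present lemma is quoted as an unconditional result of Wu--Zhao. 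So either you must import the actual mechanism of their Lemma 8 (which does not proceed through the pointwise reflection $T^{2\beta-1}\le T^{2a-1}$ plus continuous fourth moments), or your argument only proves the weaker exponent $2-2a$. (That weaker exponent would in fact suffice for the application in this paper, where $a=\tfrac12+\varepsilon'$ and the two exponents differ by $O(\varepsilon'^{2})$, but it does not prove the lemma as stated, nor the range condition $a\le 5/8$, which plays no role in your sketch and is a sign that a different balancing is happening in the real proof.)

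One smaller point: your reduction $R\le 2N'_K(T,a)$ via the symmetry $\rho\mapsto1-\bar\rho$ is fine, but you should note explicitly that $N'_K(T,a)$ only counts zeros with $\beta_K\ge\tfrac12$ while the sum $S(T,a)$ runs over $1-a\le\beta\le a$; the factor $2$ (and the fact that the map preserves the ordinate and the simplicity of the zero of $\zeta_K$) deserves a sentence, though it is not where the difficulty lies.
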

\begin{proof}
See Lemma 8 in \cite{Zhao}.
\end{proof}
\begin{Lemma}\label{c}
With the notations above, we have
\[
S(T,1)=L(1,\chi)\frac{T}{4\pi}\log^{2}T+O(T\log T).
\]
\end{Lemma}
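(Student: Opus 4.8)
The plan is to evaluate $S(T,1)$ by the argument principle: I would write the sum over zeros as a contour integral of a logarithmic-derivative kernel and then extract its main term via the functional equations recorded in Section~2. The right object to integrate is
\[
F(s) = \frac{\zeta'(s)^2}{\zeta(s)}\,L(1-s,\chi),
\]
and a short local expansion shows that $F$ is \emph{holomorphic} at every multiple zero of $\zeta$ (there $\zeta'(s)^2/\zeta(s)$ vanishes, matching the fact that such zeros contribute $0$ to $S(T,1)$), while at a simple zero $\rho$ it has residue exactly $\zeta'(\rho)L(1-\rho,\chi)$. Hence, with a positively oriented rectangle $\mathcal{C}$ whose vertical sides are $\Re s = c$ and $\Re s = 1-c$ (with $c>1$, so $\mathcal{C}$ is symmetric about $\Re s = 1/2$) and whose horizontal sides sit near heights $1$ and $T$, the residue theorem gives
\[
S(T,1) = \frac{1}{2\pi i}\oint_{\mathcal{C}} F(s)\,ds - \operatorname{Res}_{s=1}F(s).
\]
Because $\zeta$ has a simple pole at $s=1$, the factor $\zeta'(s)^2/\zeta(s)$ has a pole of order three there, so $\operatorname{Res}_{s=1}F(s)$ is a constant depending only on $\chi$ and the Laurent data of $\zeta$ and $L(\cdot,\chi)$ at $s=1$; it is harmless, being absorbed into $O(T\log T)$.

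First I would discard the horizontal segments by choosing the height $T$ to keep a fixed distance from the ordinates of zeros, so that $\zeta'/\zeta \ll \log^2 T$ there, and bounding the integrand with convexity estimates and the functional equations. On the vertical lines I would pass to the main term. On $\Re s = 1-c$ the factor $L(1-s,\chi)=\sum_m\chi(m)m^{s-1}$ is absolutely convergent, and the functional equation $\zeta(s)=\Delta(s)\zeta(1-s)$, with $\Delta(s)=2(2\pi)^{s-1}\sin(\pi s/2)\Gamma(1-s)$ and $(\Delta'/\Delta)(s)\sim-\log(t/2\pi)$, gives
\[
\frac{\zeta'(s)^2}{\zeta(s)} = \Delta(s)\left[\left(\frac{\Delta'}{\Delta}(s)\right)^2\zeta(1-s) - 2\frac{\Delta'}{\Delta}(s)\zeta'(1-s) + \frac{\zeta'(1-s)^2}{\zeta(1-s)}\right].
\]
The dominant piece is $\Delta(s)(\Delta'/\Delta)(s)^2\,\zeta(1-s)L(1-s,\chi)$: the two factors $(\Delta'/\Delta)(s)$ supply the weight $\log^2 T$, while
\[
\zeta(1-s)L(1-s,\chi) = \zeta_K(1-s) = \sum_{n\ge1}\frac{(1\ast\chi)(n)}{n^{1-s}}.
\]
A stationary-phase evaluation of the resulting oscillatory integral (the phase of $\Delta(s)n^{s-1}$ is stationary at $t\approx 2\pi n$) reduces the main term to a weighted sum of the coefficients $(1\ast\chi)(n)$ over $n\le T/2\pi$; since $\zeta_K(s)=\zeta(s)L(s,\chi)$ has a simple pole at $s=1$ with residue $L(1,\chi)$, one has $\sum_{n\le Y}(1\ast\chi)(n)\sim L(1,\chi)Y$, and this is exactly the origin of the constant $L(1,\chi)$. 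Assembling the vertical contributions then yields $L(1,\chi)\frac{T}{4\pi}\log^2 T$, with the pieces carrying $\zeta'(1-s)$ and $\zeta'(1-s)^2/\zeta(1-s)$ feeding only into $O(T\log T)$.

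The hard part will be the rigorous extraction of this main term from the oscillatory vertical integrals together with uniform control of the errors. Concretely I must: (i) justify the stationary-phase step and show the diagonal term is the only contribution of size $T\log^2 T$; (ii) handle the line $\Re s = c$, where $L(1-s,\chi)$ is itself large and has to be opened up by its own functional equation, and verify that its contribution combines correctly with that of $\Re s = 1-c$ rather than producing a spurious main term; and (iii) bound the horizontal segments, whose integrand becomes large near the left edge owing to the growth of $\Delta$ and of $L(1-s,\chi)$—this is the most delicate estimate and is what forces the careful choice of contour height and the use of mean-value rather than pointwise bounds. As a cross-check, the same main term emerges more transparently by inserting the approximate functional equation $L(1-\rho,\chi)\approx\sum_m\chi(m)m^{\rho-1}$ into $S(T,1)$, interchanging the two sums, and applying a Gonek-type formula $\sum_{0<\gamma<T}\zeta'(\rho)m^{\rho} = \frac{T}{4\pi}\log^2\frac{T}{2\pi}+O(\cdots)$ term by term, whereupon $\sum_m \chi(m)/m = L(1,\chi)$ recovers the constant.
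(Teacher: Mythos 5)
The paper does not actually prove this lemma: its ``proof'' is the single line ``This is (18) in \cite{c}'', i.e.\ the asymptotic is imported wholesale from Conrey--Ghosh--Gonek. What you have written is, in outline, a reconstruction of the argument in that cited source, so in substance you are following the same route as the result's actual origin rather than a different one. Your structural claims check out: $\zeta'(s)^2/\zeta(s)\,L(1-s,\chi)$ has residue $\zeta'(\rho)L(1-\rho,\chi)$ at a simple zero and is holomorphic at a multiple zero (order of vanishing $m-2\geq 0$), matching the fact that multiple zeros contribute $0$ to $S(T,1)$; the pole at $s=1$ is of order three and its residue is an absolute constant; the functional-equation expansion of $\zeta'(s)^2/\zeta(s)$ on $\Re s=1-c$ is algebraically correct; and the identification of the main term via $\zeta(1-s)L(1-s,\chi)=\zeta_K(1-s)$ together with $\sum_{n\leq Y}(1\ast\chi)(n)\sim L(1,\chi)Y$ is exactly how the constant $L(1,\chi)$ arises. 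You are also right to flag points (i)--(iii) as the real work: in particular the line $\Re s=c$ does produce terms of size $T\log^2 T$ that must be tracked against the left edge, and the term-by-term use of the Gonek-type formula $\sum_{0<\gamma<T}\zeta'(\rho)m^{\rho}$ requires an error term uniform enough in $m$ to be summed against $\chi(m)/m$ over the full range of the approximate functional equation. None of this is a gap in the sense of a wrong idea; it is the standard (and nontrivial) machinery of \cite{c}, and if your goal were only to match the paper you could, as the paper does, simply cite (18) of \cite{c}.
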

\begin{proof}
This is (18) in \cite{c}.
\end{proof}

\begin{Lemma}\label{i00}
For $k\geq1$ a fixed integer let $1/2\leq \sigma_{k}^{*}<1$ denote the infimum of all numbers $\sigma_{k}^{*}$ for which
\[
\int_{1}^{T}|\zeta(\sigma_{k}^{*}+it)|^{2k}dt\ll T^{1+\varepsilon}
\]
holds for any $\varepsilon>0.$ Then the asymptotic formula
\[
\int_{1}^{T}|\zeta(\sigma+it)|^{2k}dt=T\sum_{n=1}^{\infty}
d_{k}(n)^{2}n^{-2\sigma}+R(k,\sigma,T)
\]
holds for $\sigma_{k}^{*}<\sigma<1$ with
\[
R(k,\sigma,T)\ll T^{(2-\sigma-\sigma_{k}^{*})/(2-2\sigma_{k}^{*})}.
\]
\end{Lemma}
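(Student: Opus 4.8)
The plan is to derive the asymptotic formula by isolating the diagonal main term with the Montgomery--Vaughan mean value theorem for Dirichlet polynomials, and then controlling the remaining tail by a convexity (Hadamard three-lines / Gabriel type) estimate for mean values, the latter being precisely where the hypothesis on the line $\Re s=\sigma_{k}^{*}$ is used. Throughout I write $\zeta(s)^{k}=\sum_{n\ge 1}d_{k}(n)n^{-s}$ for $\Re s>1$, so that $d_{k}$ is the $k$-fold divisor function; note that $\sum_{n}d_{k}(n)^{2}n^{-2\sigma}$ converges for $\sigma>1/2$, which is guaranteed here since $\sigma>\sigma_{k}^{*}\ge 1/2$. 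I would fix the truncation length $N=T^{1/(2-2\sigma_{k}^{*})}$ and split $\zeta(s)^{k}=P_{N}(s)+g_{N}(s)$, where $P_{N}(s)=\sum_{n\le N}d_{k}(n)n^{-s}$ is a Dirichlet polynomial and $g_{N}(s)=\zeta(s)^{k}-P_{N}(s)$ is analytic away from the pole at $s=1$. Expanding $|\zeta(\sigma+it)|^{2k}=|P_{N}+g_{N}|^{2}$ and integrating over $[1,T]$ splits the moment into $\int|P_{N}|^{2}$, the cross term $2\Re\int_{1}^{T}P_{N}\overline{g_{N}}\,dt$, and $\int|g_{N}|^{2}$.

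First I would extract the main term. The Montgomery--Vaughan mean value theorem gives
\[
\int_{1}^{T}|P_{N}(\sigma+it)|^{2}\,dt=T\sum_{n\le N}d_{k}(n)^{2}n^{-2\sigma}+O\!\big(N^{2-2\sigma+\varepsilon}\big),
\]
and completing the finite sum to the full series introduces a tail $T\sum_{n>N}d_{k}(n)^{2}n^{-2\sigma}\ll TN^{1-2\sigma+\varepsilon}$. With the choice $N=T^{1/(2-2\sigma_{k}^{*})}$ both of these are $\ll T^{(2-\sigma-\sigma_{k}^{*})/(2-2\sigma_{k}^{*})+\varepsilon}$; here one uses the inequality $\sigma+\sigma_{k}^{*}>1$, valid because $\sigma>\sigma_{k}^{*}\ge 1/2$, to ensure that the tail is dominated. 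Thus $\int|P_{N}|^{2}$ already delivers the stated main term $T\sum_{n\ge 1}d_{k}(n)^{2}n^{-2\sigma}$ with an admissible error.

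The heart of the argument is to bound the mean square of the tail $g_{N}$, and I would do this by a convexity theorem for mean values of the analytic function $g_{N}$ across the strip $\sigma_{k}^{*}\le \Re s\le b$ with $b$ large. On the left edge one has $\int_{1}^{T}|g_{N}(\sigma_{k}^{*}+it)|^{2}\,dt\ll T^{1+\varepsilon}$, since $\int_{1}^{T}|\zeta(\sigma_{k}^{*}+it)|^{2k}\,dt\ll T^{1+\varepsilon}$ by the defining property of $\sigma_{k}^{*}$, while $\int_{1}^{T}|P_{N}(\sigma_{k}^{*}+it)|^{2}\,dt\ll T^{1+\varepsilon}$ precisely because the choice of $N$ makes $N^{2-2\sigma_{k}^{*}}\asymp T$. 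On the right edge $\Re s=b$ the tail converges absolutely and its mean square is $\ll TN^{1-2b}$, which tends to $0$ as $b\to\infty$. Interpolating these two bounds yields
\[
\int_{1}^{T}|g_{N}(\sigma+it)|^{2}\,dt\ll T^{1+\varepsilon}N^{-2(\sigma-\sigma_{k}^{*})},
\]
and hence by Cauchy--Schwarz the cross term is $\ll T^{1+\varepsilon}N^{-(\sigma-\sigma_{k}^{*})}$. Inserting $N=T^{1/(2-2\sigma_{k}^{*})}$ turns the cross term into $T^{(2-\sigma-\sigma_{k}^{*})/(2-2\sigma_{k}^{*})+\varepsilon}$, which dominates all the other error contributions and gives the claimed size of $R(k,\sigma,T)$.

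The main obstacle will be making the convexity step rigorous and uniform. Gabriel's theorem applies to functions that are analytic and of finite order in the strip, so one must first remove the pole of $\zeta(s)^{k}$ at $s=1$ (for instance by working with $(s-1)^{k}\zeta(s)^{k}$, or by excising a bounded neighbourhood whose contribution to the $t$-integral over $[1,T]$ is negligible), and one must invoke a form of the convexity inequality valid for the finite segment $t\in[1,T]$ rather than the whole line. A secondary point is to confirm that the cross term really is the dominant error: this is exactly where $\sigma+\sigma_{k}^{*}>1$ is used to beat the tail term $TN^{1-2\sigma}$, and where the optimal role of the truncation length $N=T^{1/(2-2\sigma_{k}^{*})}$ becomes visible. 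Finally, the various $T^{\varepsilon}$ factors are absorbed into the definition of $\sigma_{k}^{*}$ as an infimum, so that the exponent in $R(k,\sigma,T)$ can be recorded cleanly.
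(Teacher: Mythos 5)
The paper does not actually prove this lemma: the ``proof'' is a one-line citation to Lemma 8.4 of Ivi\'c's book. Your argument is, in outline, precisely the standard proof given there --- truncation of $\zeta(s)^{k}$ at $N=T^{1/(2-2\sigma_{k}^{*})}$, the Montgomery--Vaughan mean value theorem for the Dirichlet polynomial piece, and a Gabriel-type convexity bound for the mean square of the tail --- and your exponent bookkeeping is correct (the completed-series tail is dominated because $\sigma+\sigma_{k}^{*}>2\sigma_{k}^{*}\geq 1$, and the cross term is the dominant error, of size exactly $T^{(2-\sigma-\sigma_{k}^{*})/(2-2\sigma_{k}^{*})}$ after inserting $N$), so apart from the routine technicalities you already flag (removing the pole at $s=1$ and using the finite-segment form of the convexity theorem, both handled in the cited source), the proposal is a faithful and correct reconstruction of the omitted proof.
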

\begin{proof}
See Lemma 8.4 in \cite{i}.
\end{proof}

\begin{Lemma}\label{z}
If Conjecture 2 is true with $k={\bf6},$
then for $1/2\leq\sigma\leq1,$ we have
\begin{align}\label{zd}
N(\sigma,T)\ll T^{\frac{8-8\sigma}{5-2\sigma}+\varepsilon},
\end{align}
\begin{align}\label{md}
\sum_{\substack{\rho\in\mathcal{R} \\ \beta \geq\sigma}}
|\zeta'(\rho)|^{6}\ll T^{1+\varepsilon},
\end{align}
and
\begin{align}\label{md1}
\sum_{\substack{\rho\in\mathcal{R} \\ \beta \geq\sigma}}
|L(\rho,\chi)|^{6}\ll T^{1+\varepsilon}.
\end{align}
\end{Lemma}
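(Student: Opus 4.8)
The plan is to establish the three assertions in turn. Throughout I use that Conjecture 2 with $k=\mathbf 6$, applied with $q=1$ and with $q=|D|$, supplies the two critical-line moments
\[
\int_1^T|\zeta(\tfrac12+it)|^6\,\dif t\ll T^{1+\va}\qquad\text{and}\qquad \int_1^T|L(\tfrac12+it,\chi)|^6\,\dif t\ll_q T^{1+\va}.
\]
The density bound \eqref{zd} I would deduce from the general zero-density estimate of \cite{Zh} (see also \cite{i}), which produces $N(\sigma,T)\ll T^{(2l+2)(1-\sigma)/(l+2-2\sigma)+\va}$ as soon as one has the $L^{2l}$-Lindel\"of-on-average bound $\int_1^T|\zeta(\tfrac12+it)|^{2l}\,\dif t\ll T^{1+\va}$. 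The first moment above is exactly this hypothesis with $l=3$, and inserting $l=3$ turns the exponent into $(8-8\sigma)/(5-2\sigma)$, which is \eqref{zd}. (For $l=2$ the same formula recovers Ingham's classical estimate $T^{3(1-\sigma)/(2-\sigma)+\va}$, which is reassuring.)

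For \eqref{md} and \eqref{md1} the strategy is first to upgrade the two moments from the line $\Re s=\tfrac12$ to a thin strip $\tfrac12-c/\log T\le\sigma\le 1$, and then to sample on the zeros. To the right of the critical line the extension is the mean-value interpolation of Lemma \ref{i00} and its exact analogue for $L(s,\chi)$: since the relevant exponent $\sigma_3^{*}$ equals $\tfrac12$, the main term $T\sum_{n}d_3(n)^2n^{-2\sigma}$ is $\ll_\sigma T$ and the remainder is $\ll T^{3/2-\sigma}=o(T)$ for fixed $\sigma\in(\tfrac12,1)$. To the left of the line I would invoke the functional equations recorded in Section 2, whose reflection factors have modulus $\asymp(|t|/2\pi)^{1/2-\sigma}$ and $\asymp(q|t|/2\pi)^{1/2-\sigma}$; for $\sigma\ge\tfrac12-c/\log T$ these contribute only $T^{O(1/\log T)}=O(1)$, so the sixth moments survive uniformly across the whole strip. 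Finally, to reach $\zeta'$ I would differentiate by Cauchy's formula on circles of radius $1/\log T$, giving $|\zeta'(s)|^6\ll(\log T)^6\max_{|w-s|=1/\log T}|\zeta(w)|^6$ and hence $\int_1^T|\zeta'(\sigma+it)|^6\,\dif t\ll T^{1+\va}$ in the same strip (the pole at $s=1$ is harmless, as all zeros have $\gamma\ge 14$).

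The transfer to the zeros uses that $L(s,\chi)$ is entire and that $(\zeta')^3$ and $L(\cdot,\chi)^3$ are analytic, so the sub-mean-value inequality gives, for every zero $\rho=\beta+i\gamma$ with $\beta\ge\sigma$,
\[
|L(\rho,\chi)|^6\le\frac{(\log T)^2}{\pi}\int_{|s-\rho|\le 1/\log T}|L(s,\chi)|^6\,\dif A(s),
\]
and likewise for $|\zeta'(\rho)|^6$. Summing over $\rho$ and exchanging the order, a point $s$ is counted with multiplicity equal to the number of zeros within $1/\log T$ of it, which is $\ll\log T$ by the Riemann--von Mangoldt formula; combined with the strip moment just established this yields $\sum_{\beta\ge\sigma}|L(\rho,\chi)|^6\ll(\log T)^3\,T^{1+\va}\ll T^{1+\va}$, and \eqref{md} follows identically. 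The step I expect to be most delicate is exactly this discretization: one must guarantee the continuous sixth moment uniformly in a strip that reaches just below the critical line --- which is what the harmless factor $T^{O(1/\log T)}=O(1)$ from the functional equation secures --- and one must control the clustering of zeros inside the little disks so that the passage from integral to sum costs only powers of $\log T$, all of which are absorbed into $T^\va$.
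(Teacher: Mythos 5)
Your proposal is correct and follows essentially the same route as the paper: the density bound comes from the $l=3$ case of the zero-density theorem of Ye--Zhang driven by the assumed sixth moment, and the discrete sums over zeros are handled by converting $|\zeta'(\rho)|^6$ and $|L(\rho,\chi)|^6$ into area integrals over disks of radius $\asymp1/\log T$, invoking the continuous sixth moment in a thin strip via Lemma \ref{i00}, and paying only powers of $\log T$ for the $\ll\log T$ multiplicity of zeros. If anything, you are more explicit than the paper about the two points it glosses over --- the reflection across the critical line via the functional equation for the part of the disks with $\sigma<1/2$, and the uniformity of the strip moment as $\sigma\to 1/2^{+}$ --- so no changes are needed.
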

\begin{proof}
(\ref{zd}) can be obtained by Theorem 1 in \cite{Zh} for the case $l=3,$ $\alpha=1,$ and $\theta=1.$ (see also in \cite{i}).
Now we begin to prove (\ref{md}).
By Cauchy's theorem,
\[
\zeta'(\rho)=\frac{1}{\Delta}\int_{\Delta}^{2\Delta}
\frac{1}{2\pi i}\int_{|s-\rho|=\delta}
\frac{\zeta(s)}{(s-\rho)^{2}}dsd\delta,
\]
where $\Delta=(\log T)^{-1}.$ Hence if $1/2\leq \beta <1,$ then using H\"older's inequality we have
\[
|\zeta'(\rho)|^{6}\ll T^{\varepsilon}\int_{1/2-2\Delta}^{1+2\Delta}
\int_{1}^{T+1}|\zeta(\sigma+it)|^{6}dtd\sigma.
\]
Since the number of $\rho$ in a square of side length 1 is $\ll (\log T),$ then by Lemma \ref{i00} we have
\[
\sum_{\substack{\rho\in\mathcal{R}
\\ \beta\geq1/2}}|\zeta'(\rho)|^{6}
\ll T^{\varepsilon}\int_{1/2-2\Delta}^{1+2\Delta}\int_{1}^{T+1}
|\zeta(\sigma+it)|^{6}dt d\delta \ll
T^{1+\varepsilon}.
\]
(\ref{md1}) can be proved by adapting the argument of \cite{c,Zhao} and Lemma \ref{i00} in assuming Conjecture 2 with $k=6.$
\end{proof}

\section{Proof of  Theorem \ref{sm}}
 Then we will prove the following lemma.
\begin{Lemma}\label{w}
For any sufficiently small $\varepsilon>0$ and  $a\geq1/2+10\sqrt{\varepsilon},$ if Conjecture 2 is true with $k={\bf6},$
we have
\[
S(T,a)\asymp T\log^{2}T.
\]
\end{Lemma}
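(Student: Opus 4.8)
The plan is to prove that $S(T,a) \asymp T\log^2 T$ by showing that the full sum $S(T,1)$, which Lemma \ref{c} already evaluates as $L(1,\chi)\frac{T}{4\pi}\log^2 T + O(T\log T)$, is not materially changed when we restrict the summation to zeros with $1-a \le \beta \le a$. In other words, the crux is to bound the tail
\[
S(T,1) - S(T,a) = \sum_{\substack{\rho \in \mathcal{R} \\ \beta > a \text{ or } \beta < 1-a}} \zeta'(\rho) L(1-\rho,\chi),
\]
and to show this tail is $o(T\log^2 T)$, in fact of size $\ll T^{1-\delta}$ for some $\delta > 0$ depending on $\varepsilon$. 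Because the two functional equations \eqref{fe} and \eqref{fed} relate values at $1-\rho$ to values at $\rho$ with a factor $T^{\beta - 1/2}$, and because $\zeta'$ and $L(\cdot,\chi)$ vanish in a symmetric strip, it suffices by symmetry to control the contribution of zeros with $\beta > a$, picking up the penalizing factor $T^{\beta - 1/2}$ from \eqref{fe}.

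First I would use \eqref{fe} to write, for each zero $\rho$ with $\beta > a$,
\[
|\zeta'(\rho) L(1-\rho,\chi)| \ll T^{\beta - 1/2} |\zeta'(\rho)| \, |L(\rho,\chi)|,
\]
so that the tail is bounded by $\sum_{\beta > a} T^{\beta-1/2} |\zeta'(\rho)||L(\rho,\chi)|$. To separate the two Dirichlet-series factors I would apply Hölder's inequality with exponents $6,6,3/2$ (or split the $T^{\beta-1/2}$ weight appropriately), invoking the sixth-moment bounds \eqref{md} and \eqref{md1} from Lemma \ref{z} to handle $\sum |\zeta'(\rho)|^6 \ll T^{1+\varepsilon}$ and $\sum |L(\rho,\chi)|^6 \ll T^{1+\varepsilon}$. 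The remaining factor, carrying the weight $T^{\beta-1/2}$ summed over the zeros in the range $\beta > a$, is where the zero-density estimate \eqref{zd} enters: I would dyadically decompose the strip $a < \beta < 1$ and use $N(\sigma,T) \ll T^{(8-8\sigma)/(5-2\sigma)+\varepsilon}$ to count zeros in each sub-strip, bounding the corresponding piece by a partial summation or direct estimate of the form $\int_a^1 T^{\sigma - 1/2}\, dN(\sigma,T)$.

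The heart of the matter, and the step I expect to be the main obstacle, is verifying that the exponent emerging from combining $T^{\sigma-1/2}$ with the density exponent $(8-8\sigma)/(5-2\sigma)$ and the $(1-2/k)$-scaled moment contributions stays strictly below $1$ uniformly for $\sigma > a \ge 1/2 + 10\sqrt{\varepsilon}$. This is precisely the balancing computation the introduction flags: one obtains a bound of shape $T^{1 - B(\sigma,k,l)(\sigma - 1/2) + (1-2/k)\varepsilon}$ with $B(\sigma,k,l) = 2l(1-2/k)/(l+2-2\sigma) - 1$, and the requirement $2l(1-2/k) - l - 2 + 2\sigma > 0$ forces $k/(k-4) < l$. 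With the choice $k = 6$ and $l = 3$ supplied by \eqref{zd}, this inequality reads $3 < 3$ at the boundary but holds strictly once $\sigma > 1/2$, which is exactly why the lower cutoff $a \ge 1/2 + 10\sqrt{\varepsilon}$ is imposed; the positive gap $\sigma - 1/2 \gg \sqrt{\varepsilon}$ then guarantees $B(\sigma,6,3)(\sigma-1/2) \gg \varepsilon$ dominates the error term $(1-2/k)\varepsilon$, yielding $S(T,1) - S(T,a) \ll T^{1-\delta}$.

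Once the tail is shown to be $O(T^{1-\delta})$, it is negligible compared to the main term $\asymp T\log^2 T$ from Lemma \ref{c}, and I would conclude $S(T,a) = S(T,1) + O(T^{1-\delta}) \asymp T\log^2 T$, completing the proof. The delicate points to get right are the uniformity of all estimates in $\sigma$ across the dyadic ranges, the correct pairing of Hölder exponents so that both sixth moments and the density bound can be applied simultaneously, and tracking that the factor $1 - 2/k = 2/3$ for $k=6$ is strong enough — this is the quantitative reason $k=6$ works whereas $k \in (4,6)$ does not.
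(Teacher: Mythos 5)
Your proposal follows essentially the same route as the paper's proof: reduce to bounding $S(T,1)-S(T,a)$ via the functional-equation inequalities \eqref{fe}--\eqref{fed}, apply H\"older with exponents $3/2,6,6$ to pair the zero-density bound \eqref{zd} against the sixth moments \eqref{md}--\eqref{md1}, carry the weight $T^{\beta-1/2}$ by partial summation over $\sigma$, and check that the resulting exponent $1-\tfrac{2}{5-2\sigma}(\sigma-\tfrac12)^2$ stays below $1-\varepsilon$ once $\sigma-\tfrac12\gg\sqrt{\varepsilon}$, before invoking Lemma \ref{c}. The balancing computation you flag as the main obstacle is exactly the one the paper performs, and your identification of $B(\sigma,6,3)(\sigma-\tfrac12)=\tfrac{2(\sigma-1/2)^2}{5-2\sigma}$ matches its final bound.
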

\begin{proof}

Recall that
\[
S(T,1)-S(T,a)\ll \sum_{\substack{\rho\in\mathcal{R} \\ \beta \geq a}}
|\zeta'(\rho)L(1-\rho,\chi)|+\sum_{\substack{\rho\in\mathcal{R} \\ \beta \geq a}}
|\zeta'(1-\rho)L(\rho,\chi)|.
\]
Then by (\ref{fe}) and (\ref{fed}), one has
\begin{align}\label{000}
S(T,1)-S(T,a)
\ll \sum_{\substack{\rho\in\mathcal{R}\\\beta \geq a}}T^{\beta-\frac12}
|\zeta'(\rho)L(\rho,\chi)|.
\end{align}
By using the Riemann-Stieltjes integral (or the partial summation), one can obtain that
\begin{align*}
\sum_{\substack{\rho\in\mathcal{R}\\\beta \geq a}}T^{\beta-\frac12}
|\zeta'(\rho)L(\rho,\chi)|&=\int_{a}^{1}T^{\sigma-1/2}d
F(\sigma,\mathcal{R}),
\end{align*}
where
\[
F(\sigma,\mathcal{R}):=\sum_{\substack{\rho\in\mathcal{R} \\ \beta \geq\sigma}}
|\zeta'(\rho)L(\rho,\chi)|.
\]
Then applying  integration by parts, we have
\begin{align*}
\sum_{\substack{\rho\in\mathcal{R}\\\beta \geq a}}T^{\beta-\frac12}
|\zeta'(\rho)L(\rho,\chi)|
&\ll T^{1-\frac12}
F(1,\mathcal{R})
-T^{a-\frac12}
F(a,\mathcal{R})\\
&-\int_{a}^{1}T^{\sigma-1/2}F(\sigma,\mathcal{R})(\log T)
d\sigma.
\end{align*}

This gives that
\begin{align*}
\sum_{\substack{\rho\in\mathcal{R}\\\beta \geq a}}T^{\beta-\frac12}
|\zeta'(\rho)L(\rho,\chi)|
&\ll \max _{a\leq \sigma\leq1}T^{\sigma-\frac12}\sum_{\substack{\rho\in\mathcal{R} \\ \beta \geq\sigma}}|\zeta'(\rho)L(\rho,\chi)|(\log T)\\
&\ll \max _{a\leq \sigma\leq1}T^{\sigma-\frac12+\varepsilon}\sum_{\substack{\rho\in\mathcal{R} \\ \beta \geq\sigma}}|\zeta'(\rho)L(\rho,\chi)|.
\end{align*}

By H\"{older}'s inequalities, we have
\[
\sum_{\substack{\rho\in\mathcal{R} \\ \beta \geq\sigma}}
|\zeta'(\rho)L(\rho,\chi)|\leq
N(\sigma,T)^{\frac23}\left(\sum_{\substack{\rho\in\mathcal{R} \\ \beta \geq\sigma}}
|\zeta'(\rho)|^{6}\right)^{\frac16}\left(\sum_{\substack{\rho\in\mathcal{R} \\ \beta \geq\sigma}}
|L(\rho,\chi)|^{6}\right)^{\frac16}.
\]
Recall Lemma \ref{z} and the 6th moment hypothesis, one can get
\begin{align*}
\sum_{\substack{\rho\in\mathcal{R}\\\beta \geq\sigma}}
|\zeta'(\rho)L(\rho,\chi)|\ll
T^{\frac{8-8\sigma}{5-2\sigma}\times\frac23+\frac13+ \varepsilon}.
\end{align*}
Then
\begin{align*}
\sum_{\substack{\rho\in\mathcal{R}\\\beta \geq\sigma}}
|\zeta'(\rho)L(\rho,\chi)|\ll T^{1+\frac{3-6\sigma}{5-2\sigma}\times\frac23+ \varepsilon}
\ll
T^{1-\frac{4}{5-2\sigma}\times(\sigma-\frac12)+ \varepsilon}
.\end{align*}
Hence by (\ref{000}), we have
\begin{align*}
S(T,1)-S(T,a)&\ll \max _{a\leq \sigma\leq1}
T^{\sigma-1/2}T^{1-\frac{4}{5-2\sigma}\times(\sigma-\frac12)+ \varepsilon}\\&\ll \max _{a\leq \sigma\leq1} T^{1-\frac{4}{5-2\sigma}\times(\sigma-\frac12)+\frac{5-2\sigma}{5-2\sigma}\times(\sigma-\frac12)+ \varepsilon}
.\end{align*}
This gives that
\[
S(T,1)-S(T,a)\ll  \max _{a\leq \sigma\leq1} T^{1-\frac{2}{5-2\sigma}\times(\sigma-\frac12)^{2}+ \varepsilon}
.\]
For suitable $\varepsilon'=10\sqrt{\varepsilon},$ $a\geq1/2+\varepsilon'$ and $a\leq \sigma\leq 1,$ we can get that
\begin{align*}
S(T,1)-S(T,a)&\ll T^{1-\frac{25}{1-5\sqrt{\varepsilon}}+\varepsilon}\\
&\ll T^{1-24\varepsilon}\\
&\ll T^{1-\varepsilon}.
\end{align*}
Recall (see  Lemma \ref{c}) that
\[
S(T,1)=L(1,\chi)\frac{T}{4\pi}\log^{2}T+O(T\log T).
\]
Hence  for $a\geq1/2+\varepsilon',$ we can obtain that
\[
S(T,a)\asymp T\log^{2}T .
\]
This completes the proof of Lemma \ref{w}.
\end{proof}
Now our theorem can be deduced by Lemma \ref{cz}, Lemma \ref{w}
 and the choice of $a=1/2+\varepsilon'.$

\vspace{0.5cm}

$\mathbf{Acknowledgements}$
I am deeply grateful to the referee(s) for carefully reading the manuscript and making useful suggestions.

\vspace{0.5cm}

\address{Wei Zhang\\ School of Mathematics and Statistics\\
               Henan University\\
               Kaifeng  475004, Henan\\
               China}
\email{zhangweimath@126.com}

\begin{thebibliography}{99}
\bibitem{c}
J.B. Conrey,  A. Ghosh  and S.M. Gonek,    {\it Simple zeros of the zeta function of a quadratic number field. I.} Invent. Math.  {\bf86}, 563-576 (1986)

\bibitem{cc}
J.B. Conrey,  A. Ghosh  and S.M. Gonek,    {\it Simple zeros of the zeta function of a quadratic number
field. II}, In Analytic Number Theory and Diophantine Problems (Stillwater, OK, 1984) (Progress in
Mathematics 70)  Birkh¨auser (Boston, MA, 1987)  87-114.













\bibitem{Hea}
D.R. Heath-Brown, {\it Simple zeros of the Riemann zeta-function on the critical line.} Bull. Lond. Math. Soc. 11 (1979), 17-
18.




\bibitem{i}
A. Ivi\'{c},  {The Riemann Zeta-function.} John Wiley $\&$ Sons, New York, 1985.




\bibitem{Lev}
N. Levinson, {\it More than one third of zeros of Riemann's zeta-function are on $\sigma=1/2$.} Advances in Math., 13 (1974), 383-436.









\bibitem{Sel}
A. Selberg, {\it On the zeros of Riemann's zeta-function.} Skr. Nor. Vidensk.-Akad. Oslo, I 10 (1942), 1-59.



\bibitem{Tit}
E.C. Titchmarsh, {\it The Theory of the Riemann Zeta-function}. revised by D. R. HeathBrown, 2nd edn., Clarendon Press (Oxford, 1986).




\bibitem{Zhao}
X.S. Wu and L.L. Zhao,  {\it On simple zeros of the Dedekind zeta function of a quadratic number field}. Mathematika  {\bf65}, 851-861 (2019)












\bibitem{Zh}
Y.B. Ye and  D.Y. Zhang,  {\it Zero density for automorphic $L$ -functions}. J. Number Theory  {\bf133}, 3877-3901 (2013)
\end{thebibliography}
\end{document}